\newcommand{\free}[1]{\underset{\scriptscriptstyle #1}{\displaystyle{\ast}}\,}
\title[Vertex exactness]{When universal edge-colored directed graph $C^*$-algebras are exact}
\author{Benton L. Duncan}
\address{Department of Mathematics\\
North Dakota State University\\
Fargo, North Dakota\\
USA}
\email{benton.duncan@ndsu.edu}
\subjclass[2010]{46L05, 46L09}
\keywords{edge-colored directed graph, separated graph, $C^*$-algebra, exact}
\begin{document}

\theoremstyle{plain}
\newtheorem{theorem}{Theorem}
\newtheorem{lemma}{Lemma}
\newtheorem{proposition}{Proposition}
\newtheorem{corollary}{Corollary}

\theoremstyle{definition}
\newtheorem{definition}{Definition}
\newtheorem*{algorithm}{Algorithm}
\newtheorem*{construction}{Construction}
\newtheorem*{example}{Example}

\theoremstyle{remark}
\newtheorem*{conjecture}{Conjecture}
\newtheorem*{acknowledgement}{Acknowledgements}
\newtheorem{remark}{Remark}

\begin{abstract}
We consider when the universal $C^*$-algebras associated to edge-colored directed graphs are exact.  Specifically, for countable edge-colored directed graphs we show that the universal $C^*$-algebra is exact if and only if the $C^*$-algebra is isomorphic to a graph $C^*$-algebra which occurs precisely when the universal and reduced $C^*$-algebras of the edge-colored directed graph are isomorphic. \end{abstract} 

\maketitle

\section{Introduction and preliminaries}

The idea of considering an edge-coloring function on a directed graph was considered in \cite{DuncanGraphProduct} and from a different perspective in \cite{Ara, AraGoodearlLeavitt, AraGoodearl}.  It is natural, given the study of directed graph algebras (for a survey of this study see \cite{Raeburn}), to consider the algebras associated to edge-colored directed graphs (called separated graphs in \cite{AraGoodearl}).  The natural focus then is on the $C^*$-algebras associated to an edge-colored directed graph.

In \cite{DuncanGraphProduct} the universal $C^*$-algebra for an edge-colored directed graph was studied using a representation of these algebras as universal free products of graph algebras.  This perspective allowed a natural extension of many results about graph algebras to the edge-colored directed graph context.  In this paper we return to this subject to investigate further some problems left open in \cite{DuncanGraphProduct}.  Specifically there are examples of edge-colored directed graphs which give rise to $C^*$-algebras which are not exact and one is left with the question of when this is the case.  

Since the $C^*$-algebras of edge-colored directed graphs are considered as universal free products, one approach to these questions is to consider exactness of free products.  An analysis of exactness for free products of finite dimensional algebras in \cite{DuncanExact} led us to reconsider the question in the context of edge-colored directed graph $C^*$-algebras.  In \cite{DuncanExact} one quickly realizes the role played by the amalgamating subalgebra in exactness.  This proved to be a useful point of view for edge-colored directed graph $C^*$-algebras.  

We now explain the main results of the paper. We first introduce an operation on an edge-colored directed graph, reversing edges, which produces a new graph with isomorphic associated $C^*$-algebras.  We then, proceeding in cases, consider a set of algorithms that allow us to either show that the associated $C^*$-algebra is not exact, or eventually turn the graph into a $1$-colored directed graph.  As the $1$-colored directed graph $C^*$-algebras are nuclear this completely answers the question concerning exactness and nuclearity for the universal $C^*$-algebra of an edge-colored directed graph. 

We will use the terminology from edge-colored directed graphs but our results can be translated to separated graph $C^*$-algebras.  It should be pointed out that in this paper we are focused solely on the universal $C^*$-algebra of an edge-colored directed graph.  These questions have also been investigated for the reduced algebra studied in \cite{AraGoodearl} where it is shown that the reduced $C^*$-algebra for a separated graph is always nuclear.  

Putting our results together with these we are also able to completely determine when the universal $C^*$-algebra of an edge-colored directed graph is isomorphic to the reduced $C^*$-algebra of the edge-colored directed graph.  Giving an answer to \cite[Problem 7.2]{AraGoodearl}, this occurs precisely when the universal $C^*$-algebra is nuclear.

\section{Edge-colored directed graph $C^*$-algebras}\label{background}

By an edge colored directed graph we mean a countable directed graph $G = (V,E, r, s)$ together with a coloring function $ \chi: E \rightarrow N$, where $N$ is any set.  In this context we will mean, by countable, that both the edge and vertex sets are countable.  Many of the results in this paper extend to non-countable graphs, however it is not clear that the algorithms we construct will apply in that context.  Since the edge set is countable we can assume that $\chi$ has range contained in $ \mathbb{N}$ and we will assume so throughout, and our notation will reflect this assumption.

An edge-colored Cuntz-Krieger family for an edge colored directed graph are collections of orthogonal projections $ \{ p_v: v \in V \}$ and partial isometries $\{ S_e: e \in E \}$ that satisfy the following properties:
\begin{enumerate}
\item $S_e^*S_e = P_{s(e)}$ for all $ e \in E$.
\item $ \sum_{\{ e: r(e) = v, \chi(e) = i\}} S_e S_e^* \leq p_v$, where this is an equality if $\{ e: r(e) = v, \chi(e) = i\}$ is finite otherwise it is a strict inequality.
\end{enumerate}

There is a universal $C^*$-algebra for edge-colored CK-families associated to an edge colored directed graph and we denote it by $C^*(G, \chi_G)$.  There is a standard construction of the algebra using free products as follows.  Denote by $G_i$ the subgraph given by $ (V, \{ e: \chi_G(e) = i \}, r, s )$ then $G_i$ is a directed graph and $C^*(G, \chi) = \free{i \in \mathbb{N}} C^*(G_i)$.  We will refer to the $G_i$ as the $1$-colored subgraphs of $G$.

We refer the reader to \cite{DuncanGraphProduct} for more details and other relevant results concerning the $C^*$-algebras of edge-colored directed graphs.  

In what follows $(G,\chi)$ will denote a fixed edge-colored directed graph with edge-coloring function $ \chi$.  Given a directed graph $G = (V,E, r, s)$ we can consider the underlying undirected graph.  This graph is not a graph in the traditional sense (i.e.\ ordered pairs indicating the presence of an edge between two vertices) since we will allow multiple edges between two vertices.

We will assume that the undirected graph is connected (i.e there is a path in the undirected graph between any two vertices in the graph).  Without this assumption we can just consider connected components of the undirected graph and see that the $C^*$-algebra of the edge-colored directed graph is the direct sum of the edge-colored directed graph for each component.  

\section{Some non-exact algebras}

We start with some (known) examples involving exactness of free products which will be helpful in analyzing the general situation.

\begin{proposition} \label{non-exact} The following universal free products are not exact:
\begin{enumerate} 
\item $C(\mathbb{T}) \free{\mathbb{C}} C( \mathbb{T})$.
\item $C(\mathbb{T}) \free{\mathbb{C}} \mathbb{C}^k$ with $k \geq 2$.
\item $ M^n \free{\mathbb{C}} M^m$, if $n, m \geq 2$.
\item $M^r \free {\mathbb{C}} M^s \free{ \mathbb{C}} M^t$, if $r, s, t \geq 2$.
\end{enumerate} 
\end{proposition}

\begin{proof}
We notice first that $C(\mathbb{T}) \cong C^*(\mathbb{Z})$ and that $\mathbb{C}^k = C^*(\mathbb{Z}_k)$ for all finite $k$.  

\begin{enumerate} 
\item We have that $C(\mathbb{T}) \free{\mathbb{C}} C( \mathbb{T}) \cong C^*(\mathbb{Z}) \free{\mathbb{C}} C^*(\mathbb{Z}) \cong C^*(\mathbb{Z} \free{} \mathbb{Z}) \cong C^*(\mathbb{F}_2)$ which is known to be not exact \cite{Wassermann}.
\item Next $ C(\mathbb{T}) \free{\mathbb{C}} \mathbb{C}^k \cong C^*(\mathbb{Z}) \free{\mathbb{C}} C^*(\mathbb{Z}_k) \cong C^*(\mathbb{Z} \free{} \mathbb{Z}_k)$.  Notice that if the generators of $\mathbb{Z}_k$ are $a_1, a_2, \cdots, a_k$ then consider $A = \mathbb{Z}$ and $ B = a_1 A a_1^{-1}$ inside $ \mathbb{Z} \free{} \mathbb{Z}^k$ and let $C$ be the subgroup generated by $A$ and $B$. Then $A$ and $B$ are both isomorphic to $ \mathbb{Z}$ and $C = A \free{} B subseteq \mathbb{Z} \free{\mathbb{Z}^k}$. Hence the associated $C^*$-algebra contains a subalgebra of the previous type and hence is not exact.
\item If $n = m = 2$ then this is \cite[Proposition 1]{DuncanExact}. If $m \geq 3$ and $n \geq 2$ then there are subalgebras $A \subseteq M_m$, $B \subseteq M_n$ both isomorphic to $M_2$ and then there is a quotient from this subalgebra generated by $A$ and $B$ onto $M_2 \free{\mathbb{C}} M_2$ which is the case $n = m = 2$.
\item This is \cite[Proposition 5]{DuncanExact}.
\end{enumerate}

\end{proof}

Note that in \cite[Theorem 2]{DuncanExact} it is shown that $M_2 \free{\mathbb{C}^2} M_2$ is exact as it is isomorphic to a graph $C^*$-algebra. This is the motivating example for the idea of reversing edges, that we introduce later.
 
Finally we have that $ \mathbb{C} \free{\mathbb{C}} \mathbb{C}^k \cong \mathbb{C}^k$ and hence any algebra of this form is also nuclear.

We will in what follows do a case by case analysis which will allow us to completely answer the question of exactness for $C^*(G, \chi)$.  For the most part this analysis rests on excluding subalgebras of the type in Proposition \ref{non-exact} using the projections associated to vertices in the directed graph.  We consider how such projections can ``propagate'' through the graph and then consider the added complications that cycles in the undirected graph give rise to.

\section{Reversing edges and subgraphs}

Let $(G,\chi)$ be a finite edge-colored directed graph.  If $H$ is a directed subgraph of $G$ then $ (H, \chi)$ the edge-colored directed graph with coloring given by restriction.  We will call $(H,\chi)$ an edge-colored subgraph of $(G, \chi)$. Notice that $(H, \chi)$ is an edge-colored directed graph in its own right and we will denote this graph with the same symbol.  We have the following results connecting the $C^*$-algebra of an edge-colored subgraph to a $C^*$-subalgebra of the original graph $C^*$-algebra.

\begin{theorem}\label{subgraph} Let $(H,\chi) \subseteq (G,\chi)$ and let $A$ be the subalgebra of $C^*(G,\chi)$ generated by $ \{ S_e: e \in E(H) \}$ and $ \{ p_v: v \in V(H) \}$, then there is a surjection $\pi: A \rightarrow C^*(H,\chi)$. \end{theorem}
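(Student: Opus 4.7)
The plan is to construct $\pi$ by quotienting $A$ so as to enforce the Cuntz-Krieger equalities for $H$, and then appealing to the universal property of $C^*(H, \chi)$. Let $I \subseteq A$ be the closed two-sided ideal generated by the elements
\[
p_v - \sum_{\{e \in E(H)\,:\,r(e)=v,\,\chi(e)=i\}} S_e S_e^{*},
\]
as $(v,i)$ ranges over pairs with $v \in V(H)$ and the indicated set finite and nonempty. In the quotient $A/I$, the images of $\{S_e\}_{e \in E(H)}$ and $\{p_v\}_{v \in V(H)}$ form an edge-colored Cuntz-Krieger family for $H$: orthogonality of the vertex projections and the relation $S_e^{*} S_e = p_{s(e)}$ are inherited from $C^*(G, \chi)$, the finite-fiber Cuntz-Krieger equalities hold by the defining relations of $I$, and in the infinite case the strict inequality descends from the strict inequality already present in $C^*(G, \chi)$.

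By the universal property of $C^*(H, \chi)$, there is therefore a surjective $*$-homomorphism $\phi: C^*(H, \chi) \to A/I$ sending the universal generators to the images of $S_e$ and $p_v$. Composing the quotient $A \twoheadrightarrow A/I$ with the inverse of $\phi$ (once $\phi$ is shown to be an isomorphism) then produces the desired surjection $\pi: A \to C^*(H, \chi)$.

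For the injectivity of $\phi$ I would use a gauge-invariance argument. The canonical $\mathbb{T}$-action on $C^*(G, \chi)$ scaling each $S_e$ by $z$ and fixing each $p_v$ restricts to $A$ (its generators are eigenvectors for the action) and descends to $A/I$ (the generators of $I$ are gauge-invariant polynomials in $p_v$ and $S_e S_e^{*}$). This descended action intertwines $\phi$ with the canonical gauge action on $C^*(H, \chi)$, and, combined with $p_v \notin I$ for each $v \in V(H)$ so that $\phi(p_v) \neq 0$, a gauge-invariant uniqueness theorem for edge-colored graph algebras --- applied via the free-product decomposition $C^*(H, \chi) = \free{i \in \mathbb{N}} C^*(H_i)$ and the classical gauge-invariant uniqueness theorem for each $1$-colored subgraph $H_i$ --- will force $\phi$ to be injective. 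The main obstacle is verifying these two hypotheses: showing $p_v \notin I$ for each $v \in V(H)$, and promoting the color-by-color classical gauge-invariant uniqueness theorem to a statement valid for the amalgamated free product $C^*(H, \chi)$.
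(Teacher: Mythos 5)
Your reduction to the quotient $A/I$ aims at a true statement (indeed $A/I$ turns out to be isomorphic to $C^*(H,\chi)$), but the two steps you defer are not loose ends to be checked later --- they are the entire mathematical content of the theorem, and the route you sketch for the second one cannot work as stated. First, you give no argument that $p_v\notin I$. Since $A$ is only known as a subalgebra of $C^*(G,\chi)$, you have no abstract presentation of it, so to see that the ideal generated by the Cuntz--Krieger defects misses every vertex projection you would have to exhibit a representation of $A$ killing the defects but not the $p_v$ --- which is essentially the surjection $\pi$ you are trying to construct. Second, and more seriously, the ``gauge-invariant uniqueness theorem for edge-colored graph algebras'' that you hope to obtain color by color is not merely unproven: the principle is false. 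Injectivity of a map on each free factor does not imply injectivity on the amalgamated free product. Concretely, for the graph with one vertex and two loops of different colors, $C^*(H,\chi)\cong C(\mathbb{T})\free{\mathbb{C}}C(\mathbb{T})\cong C^*(\mathbb{F}_2)$, and the quotient onto $C(\mathbb{T}^2)$ sending the two canonical unitaries to the commuting coordinate unitaries is gauge-equivariant, injective on each $1$-colored factor, and nonzero on the vertex projection, yet it is far from injective. So injectivity of your $\phi$ cannot be forced by any uniqueness theorem with those hypotheses; in your specific situation it holds, but for reasons equivalent to the theorem itself.

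The paper sidesteps both problems by mapping in the opposite direction. For each color it observes that the generators of $A_i=A\cap C^*(G_i)$ form a gauge-invariant Toeplitz--Cuntz--Krieger family for $H_i$ and invokes the \emph{co-universal} property of $C^*(H_i)$ (Katsura, Sims, Sims--Webster) to obtain surjections $\pi_i\colon A_i\rightarrow C^*(H_i)$ directly, with no quotient by an explicit ideal and no injectivity question; it then uses the decomposition $A=\free{P}A_i$ (from \cite{DuncanGraphProduct}, see also Pedersen) so that the $\pi_i$, which agree on the vertex algebra, assemble into $\pi=\free{}\pi_i\colon A\rightarrow\free{P'}C^*(H_i)=C^*(H,\chi)$ by the universal property of the amalgamated free product --- assembling surjections out of a free product only requires agreement on the amalgam, whereas assembling injectivity into one, as your plan requires, does not follow factorwise. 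If you want to rescue your approach you would need a co-universality-type lemma for the multicolored Toeplitz algebra (every gauge-invariant ideal containing no vertex projection lies in the CK ideal), which is exactly the hard point and is not supplied by the classical one-colored results you cite.
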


\begin{proof}
We first prove this for $1$-colored graphs and then use the free product decomposition of an $n$-colored graph to extend to the general case.

So assume that $H$ is a subgraph of $G$ and $G$ is $1$-colorable via the function $\chi$.  Now notice that the subalgebra $A$ will be generated by partial isometries satisfying the following relations:
\begin{enumerate}
\item The $p_v$ are mutually orthogonal nonzero projections.
\item $S_e^*S_e= P_{s(e)}$.
\item $\sum_{ r(e) = v } S_eS_e^* \leq P_{v}$.
\end{enumerate}

In addition the gauge action on $C^*(G)$ will reduce to a gauge action on the subalgebra $A$, and hence the family $ \{ p_v, S_e \}$ generating $A$ is a gauge-invariant Toeplitz-Cuntz-Krieger family.  Since the graph algebra $C^*(H)$ is co-universal for gauge-invariant Toeplitz-Cuntz-Kreiger families (see \cite{Katsura}, \cite{Sims}, and \cite{SimsWebster} for a general discussion of co-universality of graph algebras) it follows that there is a $*$-representation $\pi: A \rightarrow C^*(H)$ which is onto.

Now if $(G,\chi)$ is $n$-colored then there exist $1$-colored graphs $\{ G_i \}_{i=1}^n$ such that $C^*(G, \chi) = \free{P} C^*(G_i)$.  Consider $A_i = A \cap C^*(G_i)$ which is generated by $ P' = \{ p_v: v \in V(H) \}$ and $ \{ S_e: e \in E(H) \cap E(G_i) \}$.  Then $A = \free{P'}A_i$ by \cite[Theorem 3]{DuncanGraphProduct} (see also \cite[Theorem 4.2]{Pedersen}).  Now for each $i$ there is $\pi_i: A_i \rightarrow C^*(H_i)$, where $H_i$ is $H \cap G_i$ which when restricted to the subalgebra $P'$ all coincide.  It follows that $ \free{} \pi_i : \free{P'} A_i \rightarrow \free{P'} C^*(H_i)$. The former algebra is of course $A$ and the latter is $C^*(H,\chi)$.
\end{proof}

\begin{definition} We say that $(H,\chi)$ is a full edge-colored subgraph of $(G,\chi)$ if the map in the previous theorem is an injection. \end{definition}

For a $1$-colored graph the full subgraphs of $G$ are given by those subgraphs such that $ \{ e \in E(H): r(e) = v \} = \{ e \in E(G): r(e) = v \}$ for every vertex in $H$ (This is just an application of the the gauge-invariant uniqueness theorem for arbitrary graphs, see \cite[Theorem 2.1]{BatesHongRaeburnSymanski}).  A similar result is true of an $n$-colored graph.

\begin{proposition} The edge-colored graph $(H,\chi) $ which is a subset of the edge-colored graph $ (G,\chi)$ is full if and only if when $ \{ e \in H: r(e) = v \} $ is nonempty then $\{ e \in G: \chi(e) = i, r(e) = v \} = \{ e \in H: \chi(e) = i, r(e) = v \}$. \end{proposition}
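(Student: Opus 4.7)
The plan is to reduce to the $1$-colored case via the free product decomposition already used in the proof of Theorem~\ref{subgraph}. Recall from there that $C^*(G,\chi) = \free{P} C^*(G_i)$, $C^*(H,\chi) = \free{P} C^*(H_i)$, and moreover $A = \free{P} A_i$ where $A_i = A \cap C^*(G_i)$, with the surjection $\pi : A \to C^*(H,\chi)$ of Theorem~\ref{subgraph} given explicitly as $\free{} \pi_i$ for the $1$-colored surjections $\pi_i : A_i \to C^*(H_i)$. Fullness of $(H,\chi)$ in $(G,\chi)$ is the injectivity of $\pi$, and the goal is to show this is equivalent to injectivity of every $\pi_i$, which by the quoted $1$-colored application of the gauge-invariant uniqueness theorem is equivalent to the edge-matching condition in the statement.

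First I would verify that $\pi$ is injective if and only if each $\pi_i$ is. For the forward direction, the canonical inclusions $A_i \hookrightarrow \free{P} A_i = A$ and $C^*(H_i) \hookrightarrow \free{P} C^*(H_i) = C^*(H,\chi)$ are injective (a standard property of universal unital free products with amalgamation over the common copy of $P$). Since $\pi$ intertwines these inclusions with $\pi_i$ on each factor, injectivity of $\pi$ forces injectivity of each $\pi_i$. For the converse, if every $\pi_i$ is injective, then because each is already surjective by Theorem~\ref{subgraph} applied in the $1$-colored setting, each $\pi_i$ is a $*$-isomorphism fixing $P$; taking the free product of these isomorphisms yields an isomorphism $\free{} \pi_i : \free{P} A_i \to \free{P} C^*(H_i)$, which is exactly $\pi$.

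Next I would translate injectivity of $\pi_i$ into a combinatorial condition on $(H_i, G_i)$ using the $1$-colored result cited just before the proposition. That result says $\pi_i$ is injective precisely when, at every vertex $v$ at which $H_i$ has at least one incoming edge, the sets $\{e \in H : \chi(e) = i, r(e) = v\}$ and $\{e \in G : \chi(e) = i, r(e) = v\}$ coincide; otherwise the Toeplitz-Cuntz-Krieger family underlying $A_i$ fails the full Cuntz-Krieger relation at $v$ that holds in $C^*(H_i)$, obstructing injectivity. Quantifying this condition over all colors $i$ produces exactly the condition appearing in the statement, since $\{e \in H : r(e) = v\}$ is nonempty precisely when some color $i$ contributes an edge of $H$ into $v$, and only those colors can witness a failure.

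The main obstacle is the clean handling of step one: ensuring that the universal free product construction is functorial enough to let injectivity of each $\pi_i$ transfer to injectivity of $\pi$. This is where the appeal to the amalgamated-free-product structure is essential, together with the fact that the canonical maps from the factors into the universal free product are injective; it also requires recognizing that the $\pi_i$ restrict to the identity on the common subalgebra $P$ (the span of the vertex projections), so that $\free{} \pi_i$ is well-defined.
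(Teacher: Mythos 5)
Your argument is essentially the paper's proof made explicit: the paper's one-sentence proof (``the third TCK relation becomes the CK relation iff the inequality is an equality at each vertex receiving finitely many edges'') is exactly your two steps, namely the color-by-color reduction through the decomposition $A = \free{P} A_i$, $C^*(H,\chi) = \free{P} C^*(H_i)$, $\pi = \free{} \pi_i$ already set up in Theorem~\ref{subgraph}, followed by the one-colored gauge-invariant uniqueness criterion quoted just before the proposition. Both directions of your first step are sound, and in fact lighter than you make them: $A_i$ is literally a subalgebra of $A$, so injectivity of $\pi$ gives injectivity of $\iota_i \circ \pi_i$ and hence of $\pi_i$ without any appeal to factor embeddings (which, as a blanket ``standard property'' of full amalgamated free products, is not true in general and in this paper rests on \cite{Armstrongetal}); and a free product of isomorphisms agreeing on $P$ is an isomorphism by the universal property.

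The one genuine wrinkle is your closing reconciliation. What your argument (and the paper's) actually establishes is the per-color criterion: $\pi$ is injective if and only if for every color $i$ and vertex $v$ with $\{ e \in H : \chi(e) = i,\ r(e) = v \} \neq \emptyset$ one has $\{ e \in G : \chi(e) = i,\ r(e) = v \} = \{ e \in H : \chi(e) = i,\ r(e) = v \}$. Your claim that ``only those colors can witness a failure,'' used to identify this with the condition as literally stated (nonemptiness of $\{ e \in H : r(e) = v \}$ forcing equality for \emph{all} colors $i$), is false: if $H$ contains the unique color-$1$ edge of $G$ into $v$ but omits a color-$2$ edge of $G$ into $v$, and no color-$2$ edge of $H$ ends at $v$, then the literal condition fails while $\pi$ is still injective, since no relation is imposed at $v$ in color $2$. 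So your proof establishes the correct per-color statement, which is also what the paper's terse proof supports; the unbound $i$ in the statement should be read per color, and your bridging sentence should be deleted rather than defended.
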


\begin{proof}
This is just the fact that the third defining relation for a Toeplitz-Cuntz-Krieger family gives a Cuntz-Krieger family if and only if the CK-inequality is in fact equality for any vertex receiving finitely many edges.  
\end{proof}

In this section we will assume that $G = (V,E,r,s)$ with both $E$ and $V$ being countable. 

We start with the fact that $C^*(G, \chi)$ is generated by the sets $ \mathcal{P} := \{ p_v, v \in V \}$ and $ \mathcal{S} := \{ S_e: e \in E \}$, and $ \mathcal{S}^*:= \{ S_e^*: e \in E \}$ with the following relations: 
\begin{itemize} 
\item $p_v S_e = \begin{cases} S_e & \mbox{ if } r(e) = v \\ 0 & \mbox{ otherwise} \end{cases}$.
\item $S_e p_v = \begin{cases} S_e & \mbox{ if } s(e) = v \\ 0 & \mbox{ otherwise} \end{cases}$.
\item $S_eS_f = 0 \mbox{ if } r(f) \neq s(e)$ otherwise it is nonzero.
\item $S_e^*S_f = 0 \mbox{ if } r(e) \neq r(f) \mbox{ or } r(e) = r(f), e \neq f, \mbox{ and } \chi(e) = \chi(f)$ otherwise it is nonzero.  Also if $e = f$ then this is $P_{s(e)}$.
\item $S_eS_f^* = 0 \mbox{ if } s(e) \neq s(f)$ otherwise it is nonzero.  If $e =f$ then this is a subprojection of $P_{r(e)}$.
\end{itemize}

That the ``nonzero'' products above are non-zero one can consider the fact that the products are non-zero in the Leavitt path algebra associated to $ (G, \chi)$ \cite{AraGoodearlLeavitt} and that the Leavitt path algebra injects into $C^*(G< \chi)$ applying \cite[Theorem 3.8]{AraGoodearl}. However, those products of $S_e$ and $S_f^*$ which are nonzero in the preceding list need not give rise to a partial isometry, since we don't know a priori that the range projections of the $S_e$ commute (in fact they often do not, see \cite{AraExel} where they consider a variant of the separated graph algebras where they quotient by the commutators of non-commuting range projections).  An important fact we will use, however, is that when we know that the family $ \{ S_eS_e^*: e\in E \}$ consists of mutually commuting projections any such product will, in fact, be a partial isometry \cite{AraExel}

For a vertex $v$ we define the vertex degree of the vertex to be equal to an $n$-tuple $d_v = (a_1, a_2, \cdots, a_n )$ where $a_i$ is the number of edges labeled by $i$ which end at the vertex $v$. We will allow $a_i = \infty$ in the case that the set of edges colored $i$ is countably infinite.  We also allow $d_v$ to be a sequence (in the case that $v$ receives edges of infinitely many different colors).

\begin{remark} \label{ordering} Notice that by \cite[Theorem 2]{DuncanGraphProduct} we can without loss of generality assume that $a_1 \geq a_2 \cdots \geq a_n$. We will do so implicitly throughout (with two exceptions that occur inside proofs to simplify the arguments and will be noted explicitly).\end{remark}  

We now introduce a construction which will allow us to ``reverse'' certain edges in the graph without affecting the associated $C^*$-algebra.  

Let $(G,\chi) $ be an edge-colored directed graph with $e \in E(G)$.  Construct a new graph by reversing the edge $e$, call it $G_{e}$.  Formally we have $V(G) = V(G_{e})$, $E(G_e) = (E(G) \setminus \{ e \}) \cup \{\overline{e} \}$, $r_{G_e}(f) = r(f)$ and $s_{G_e}(f) = s(f)$ for all $ f \in E(G) \setminus \{ e \}$, and $r_{G_e}(\overline{e}) = s(e)$ and $s_{G_e}(\overline{e}) = r(e)$.  Next define $\chi_{G_e}(f) = \chi(f)+1$ for all $f \in E(G) \setminus \{ e \}$ and $\chi_{G_e}(\overline{e}) = 1$.  We say that $ (G_e, \chi_{G_e})$ is the graph obtained from $(G,\chi)$ by reversing the edge $e$.  Note that the new graph involves changing the color of the edge $e$.  In particular, reversing an edge $e$ and then reversing the reversed edge may not yield the original graph.

\begin{example}
Consider the one-colored graph \[ \xymatrix{ {\bullet} \ar@/^/[r]^{f} \ar@/_/[r]_{g} & {\bullet} } \]
Reversing the edge $g$ gives a graph which is equivalent to \[ \xymatrix{ {\bullet} \ar@/^/[r]^{f} & {\bullet} \ar@/^/[l]^{\widehat{g}} }\]
and then reversing the edge $\widehat{g}$ yields the graph \[ \xymatrix{ {\bullet} \ar@/^/[r]^{f} \ar@{.>}@/_/[r]_{\widehat{\widehat{g}}} & {\bullet} } \]
In the first case the associated $C^*$-algebra is $M_3$ and the third graph gives rise to the $C^*$-algebra $M^2 \free{\mathbb{C}^2} M^2$ which is not finite dimensional.
\end{example}

The next proposition now yields information about the new $C^*$-algebra when an edge is reversed.

\begin{proposition}\label{reversing} Let $(G,\chi)$ be an edge-colored directed graph and $e$ an edge in $G$.  If $\chi_G(e) \neq \chi_G(g)$ for any edge $g$ with $r(g) = r(e)$ then $C^*(G,\chi)$ is isomorphic to $C^*(G_e, \chi_{G_{e}})$. \end{proposition}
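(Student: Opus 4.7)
The strategy is to construct inverse $*$-homomorphisms $\phi: C^*(G,\chi) \to C^*(G_e,\chi_{G_e})$ and $\psi: C^*(G_e,\chi_{G_e}) \to C^*(G,\chi)$ using the universal properties, with the key observation that the hypothesis turns $S_e$ into a ``unitary'' between the corners $p_{s(e)} C^*(G,\chi) p_{s(e)}$ and $p_{r(e)} C^*(G,\chi) p_{r(e)}$.

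First I would extract the following consequence of the hypothesis: since $e$ is the only edge of color $\chi(e)$ ending at $r(e)$ and the set of such edges is therefore finite, the CK relation at the pair $(r(e),\chi(e))$ is an equality, so $S_e S_e^* = p_{r(e)}$ in $C^*(G,\chi)$. Combined with $S_e^* S_e = p_{s(e)}$, this means $S_e$ is a partial isometry between the full projections at $s(e)$ and $r(e)$. Symmetrically, by the construction of $\chi_{G_e}$, the new edge $\overline{e}$ is the unique edge of color $k+1$ ending at $r(\overline{e}) = s(e)$, so $S_{\overline{e}} S_{\overline{e}}^* = p_{s(e)}$ and $S_{\overline{e}}^* S_{\overline{e}} = p_{r(e)}$ in $C^*(G_e,\chi_{G_e})$.

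Next I would define $\phi$ on generators by $\phi(p_v) = p_v$, $\phi(S_f) = S_f$ for every $f \in E(G) \setminus \{e\}$, and $\phi(S_e) = S_{\overline{e}}^*$, and verify that the images form an edge-colored Cuntz--Krieger family for $(G,\chi)$ inside $C^*(G_e,\chi_{G_e})$. The only nonroutine verifications are those involving $e$: the identity $S_e^* S_e = p_{s(e)}$ translates to $S_{\overline{e}} S_{\overline{e}}^* = p_{s(e)}$, which holds by the previous paragraph; and the CK-sum at $(r(e),\chi(e))$, which in $G$ reduces to $S_e S_e^* = p_{r(e)}$, translates to $S_{\overline{e}}^* S_{\overline{e}} = p_{r(e)}$, which also holds. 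All other CK-sums in $G$ involve edges unchanged in $G_e$, so they are inherited. The universal property of $C^*(G,\chi)$ then yields the $*$-homomorphism $\phi$. I would construct $\psi$ symmetrically by sending $S_{\overline{e}} \mapsto S_e^*$ and fixing all other generators; the verification is identical, using the uniqueness-of-color property on the other side.

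Finally I would check on generators that $\psi \circ \phi$ and $\phi \circ \psi$ are identity maps, which is immediate since each map sends $S_e \leftrightarrow S_{\overline{e}}^*$ and fixes everything else. The main subtlety, and the heart of the argument, is recognizing that the uniqueness-of-color hypothesis is precisely what is needed to promote $S_e$ and $S_{\overline{e}}$ from mere partial isometries to isometric intertwiners of the source and range corner projections; without this, the CK equality at $(r(e),\chi(e))$ could fail and the substitution $S_e \mapsto S_{\overline{e}}^*$ would not preserve the relations.
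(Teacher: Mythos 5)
Your proof is correct, and it hinges on exactly the same key observation as the paper's: since $e$ is the only edge of its color with range $r(e)$, the singleton color class is finite, so the Cuntz--Krieger relation holds with equality and $S_eS_e^* = p_{r(e)}$ (and symmetrically $S_{\overline{e}}S_{\overline{e}}^* = p_{s(e)}$, $S_{\overline{e}}^*S_{\overline{e}} = p_{r(e)}$ in $C^*(G_e,\chi_{G_e})$). Where you differ is the scaffolding. The paper first splits $C^*(G,\chi)$ as the amalgamated free product of its $1$-colored subgraph algebras, observes that only the color class of $e$ changes, and then proves $C^*(G_1) \cong C^*((G_e)_1) \ast_P C^*((G_e)_{n+1})$ before reassembling via the free-product decomposition theorem of \cite{DuncanGraphProduct}; it also treats the loop case $r(e)=s(e)$ separately. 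You instead work directly with the universal property of the edge-colored algebra: you check that swapping $S_e \leftrightarrow S_{\overline{e}}^*$ and fixing all other generators sends CK families to CK families in both directions, noting that the only affected vertex--color pairs are $(r(e),\chi(e))$ and $(s(e),k+1)$, and that at each of these the needed relation is exactly the CK equality supplied by the uniqueness-of-color hypothesis; the two induced maps are mutually inverse on generators. Your route is more elementary and self-contained (no appeal to the free-product decomposition or co-universality machinery), handles the loop case uniformly, and makes explicit the verification at the new color $k+1$ that the paper's argument absorbs into the free-product framework; the paper's route, in exchange, keeps the argument inside the free-product formalism it uses throughout and isolates the change to a single $1$-colored summand. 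One small point worth making explicit in your write-up is that relation (2) is only imposed at vertex--color pairs that actually receive an edge, so the vacuous pairs ($(s(e),k+1)$ in $G$ and $(r(e),\chi(e))$ in $G_e$) impose no constraint; with that reading, every step you list goes through.
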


\begin{proof}
Without loss of generality \cite[Theorem 2]{DuncanGraphProduct} we 

will assume that $ \chi(e) = 1 $, $\chi_{G_e}(\overline{e}) = 1 $ and $ \chi(g) \neq 1$ for any $g \neq e$ in $E(G)$.  Similarly we assume $
\chi_{G_e}(\overline{e}) = 1 $ and $ \chi_{G_e}(g) \neq 1$ for any $g 
\neq \overline{e}$ in $E(G_e)$.  Then by \cite[Theorem 1]{DuncanGraphProduct} we have $C^*(G, \chi) = C^*(G_1) \free{P} \left( \free{P}  
\{ C^*(G_i): i \neq 1 \} \right)$ and $C^*(G_e, \chi_{G_e}) = 
C^*((G_e)_1) \free{P} \left( \free{P} \{ C^*((G_e)_i): i \neq 1 \} 
\right)$.  By construction we have that $(G_e)_i = G_i$ for all $ i \neq 1$.  And if we let $V' = V(G) \setminus \{ s(e), r(e) \}$ then $G_1 = M_2 \oplus \mathbb{C}^{|V'|}$ and $(G_e)_1 = M^2 \oplus \mathbb{C} ^{|V'|}$.  It follows that \begin{align*} C^*(G, \chi) & = \left( M_2 \oplus \mathbb{C}^{|V'|} \right) \free{P} \left( \{ C^*(G_i): i \neq 1 \} \right) \\ & = \left( M_2 \oplus \mathbb{C}^{|V'|} \right) \free{P} \left( \{ C^*((G_e)_i): i \neq 1 \} \right) \\ & = C^*(G_e, \chi_{G_e}). \end{align*}
\end{proof}

The same proof applies to the reduced edge-colored directed graph $C^*$-algebras using the reduced free product rather than the universal free product (as constructed in \cite[Definition 3.5]{AraGoodearl}).

\begin{proposition}\label{reduced} Let $(G,\chi)$ be an edge-colored directed graph and $e$ an edge in $G$.  If $\chi_G(e) \neq \chi_G(g)$ for any edge $g$ with $r(g) = r(e)$ then $C_r^*(G,\chi)$ is isomorphic to $C_r^*(G_e, \chi_{G_{e}})$. \end{proposition}

In the case that $r(e) = s(e)$ this theorem just gives rise to a trivial re-coloring of the edges and hence doesn't provide any useful change in the graph, however if $r(e) \neq s(e)$ then this operation can be used to simplify some graphs. 

We will say that an edge $e$ is reversible if $ \{ f: \chi(e) = \chi(f) \mbox{ and } r(e) = r(f ) \} = \{ e \}$. In other words, these are the edges for which the previous theorem applies. We denote by $E_{\rm rev}$ the set of all reversible edges in the graph.  Similarly we say that a vertex $v$ supports an irreversible edge if there is an edge $ e \not\in E_{\rm rev}$ with $r(e) = v$.  We denote by $V_{\rm irr}$ the set of vertices which support an irreversible edge.

\section{When is $C^*(G, \chi)$ exact/nuclear}

We are now in a position to consider exactness/nuclearity of the $C^*$-algebra of an edge-colored directed graph.  We proceed in cases depending on the set $V_{\rm irr}$. In each case we will construct an algorithm that either ends when the algebra is not exact, or exhausts the possible finite graphs. 

\subsection{The set $V_{\rm irr}$ is empty}

Let $(G, \chi)$ be a graph in which every edge is reversible (this is equivalent to $V_{\rm irr}$ is empty).  In this case $S_eS_e^* = P_{r(e)}$ for every edge $e$ and hence any product of the generating partial isometries will be a partial isometry.  We construct an alorithm in which the vertex set is unchanged throughout, but in each iteration of the algorithm we will replace $(G, \chi)$ with a directed graph (with isomorphic $C^*$-algebra) with some edges reversed.

\begin{algorithm} \label{reversible} {\em Algorithm for Reversible Edges}

Base step: 

Fix a vertex $v \in V$.  Let $V_0 = \{ v \}$, $E_0^{\rm loop} = \{ e \in G: r(e) = s(e) = v \}$ and let $E_0 = \{ e \in E \setminus E_0^{\rm loop}: r(e) \in V_0 \} $. Also let $m_1 = | E_0^{\rm loop}|$.

Step 1: 

Let $(G_1, \chi)$ be the edge-colored directed graph formed by reversing each of the edges in $E_0$. Let $V_1 = \{ w: r(e)= w, s(e) = v \mbox{ for some } e \in E(G_1) \} \setminus V_0$. Now $E_1^{\rm loop} = \{ (e,f) \in E(G_1): s(e), s(f) \in V_0, r(e) = s(e) \in V_1 \}$ and let $E_{1'}^{loop}$ be the set of all edges $e \in E_{1}$ such that $s(e) \in V_{1}, r(e) \in V_{1}$ but $ (e,f) \not\in E_{1}^{loop}$ for any $f \in E_{i-1}$. Let $E_1 = \{ e \in E(G_1): r(e) \in V_1 \} \setminus \left( \{ e: (e,f) \in E_1^{\rm loop} \mbox{ for some } f \} \right) \cup E_{1'}^{\rm loop}$. Finally we let $m_2 = | E_1^{\rm loop} | + |E_{1'}^{\rm loop}|$.

Step $i$: 
 
Let $(G_i, \chi)$ be the edge-colored directed graph formed by reversing each of the edges in $E_{i-1}$. Let $V_i \{ w: r(e) = w, s(e) = v \mbox{ for some } e \in E(G_{i-1}) \} \setminus V_{i-1}$. Let $E_i^{\rm loop} \{ (e,f) \in E(G_{i}): s(e), s(f) \in V_{i-1}, r(e) = s(e) \in V_i \}$ and $E_{i'}^{loop}$ be the set of all edges $e \in E_{i-1}$ such that $s(e) \in V_{i-1}, r(e) \in V_{i-1}$ but $ (e,f) \not\in E_{i}^{loop}$ for any $f \in E_{i-1}$. Let $E_i = \{ e \in E(G_i): r(e) \in V_i \} \setminus \left(\{ e: (e,f) \in E_i^{\rm loop} \mbox{ for some } f \}\right) \cup E_{i'}^{\rm loop}$. Now let $m_i = | E_i^{\rm loop}| + |E_{i'}^{\rm loop} |$.

As the original underlying directed graph is connected, and the graph is finite then this algorithm will terminate.  As a final step we let $M_{ (G, \chi) } = \sum m_i$. 
\end{algorithm}

We have the following theorem which completely describes exactness/nuclearity in this context.

\begin{theorem} \label{reversingtheorem} Let $(G, \chi)$ be an edge-colored directed graph in which every edge is reversible. Then $C^*(G, \chi)$ is nuclear if $m_{(G,\ chi)} \leq 1 $ and $C^*(G,\chi)$ is not exact if $M_{(G,\chi)} \geq 2$. \end{theorem}

\begin{proof}
We begin with the case in which $m_{(G, \chi)} = 0$.  In this case at the end of the algorithm we are left with a graph in which no two edges share a common range. It follows that there is a recoloring of the graph as a $1$-colored graph and hence the $C^*$-algebra is isomorphic to the $C^*$-algebra of a directed graph.  Such $C^*$-algebras are always nuclear \cite[Remark 4.3]{Raeburn}.

If $M_{(G, \chi)} = 1$ then either there is an edge in $E_{i'}^{\rm loop}$ or a pair of edges $(e,f) \in E_i^{\rm loop}$.  In either case, after the algorithm is complete we are left with the situation that either $e$ is a loop or there are two edges $e, f$ with $r(e) = r(f)$. 

If $e$ is itself a loop then redo the algorithm choosing the inital vertex to be the range of $e$.  Since there are no other ``loop'' edges we will end up with a graph with no two edges sharing a common range, and hence a $1$-colored graph. Then the $C^*$-algebra will be isomorphic to a nuclear graph $C^*$-algebra.

If instead we have that there are two edges $e$ and $f$ with $r(e) = r(f) = w$ then no other edges share a common range. There is then two directed paths from the fixed vertex $v$ to $w$ one of which ends with the edge $e$ and the other ends with the edge $f$. If one then reverses the edges in one of these two paths then the new graph will be a $1$-colored graph and again we are left with a $C^*$-algebra isomorphic to a nuclear graph $C^*$-algebra.

On the other hand, if $M_{(G, \chi)} \geq 2$ then we have at least one of the following graph possibilities: \begin{enumerate}
\item at least two edges which are loops based at the vertex $v$; \item one edge which is a loop based at the vertex $v$ and there is a pair of edges $e$ and $f$ with $r(e) = r(f)$; 
\item there are three edges $e, f,$ and $g$ with $r(e) = r(f) = r(g)$; \item or there are four edges $e,f, g,$ and $h$ with $r(e) = r(f)$ and $r(g) = r(h)$. \end{enumerate} 

In each of these cases we will identify a subalgebra which maps onto a copy of $C^*(\mathbb{F}_2)$.  To do this let $U$ and $V$ denote the unitary generators of $C^*(\mathbb{F}_2)$.  If $n = |V(G)|$ we will first identify an edge-colored Cuntz-Krieger family in $\mathcal{A} = M_n(C^*(\mathbb{F}_2))$.  The identification depends on which of the cases we have above.  For notation sake we will write $E_{i,j})$ to be the matrix in $\mathcal{A}$ with a $1_{C^*(\mathbb{F}_2))}$ in the $i-j$ entry and zeroes everywhere else.  Similarly we will write $U_{i,j}$ and  $V_{i,j}$ for the matrix with $U$ ($V$ respectively) in the $i,j$ entry and zeroes everywhere else.

For the first graph possibility let $e$ be the first loop based at $v$ and $g$ the second loop based at $v$.  We will write $A = S_e$ and $B = S_g$ inside $C^*(G, \chi)$.

In the second situation there is a loop edge $g$ based at $v$ and two directed paths $\mu = e_1e_2\cdots e_ne$ and $ \nu= f_1f_2\cdots f_mf$. Then let $A = S_{e_1}S_{e_2} \cdots S_{e_n} S_e S_f^*S_{f_{m}}^* \cdots S_{f_1}^*$ and $B = S_g$ in the $C^*$-algebra $C^*(G, \chi)$.

In the third situation we have three paths $ \mu = e_1e_2 \cdots e_ne$, $\nu = f_1f_2, \cdots f_mf$ and $\tau = g_1g_2\cdots g_kg$ all of which begin at $v$ and end at the common vertex $w = r(e) = r(f) = r(g)$.  And consider the two elements of $C^*(G, \chi)$ given by $A = S_{e_1}S_{e_2} \cdots S_{e_n} S_e S_f^*S_{f_{m}}^* \cdots S_{f_1}^*$ and $B = S_{g_1}S_{g_2} \cdots S_{g_k} S_g S_f^*S_{f_{m}}^* \cdots S_{f_1}^*$.

In the fourth situation we have four paths $ \mu = e_1e_2 \cdots e_ne$, $\nu = f_1f_2, \cdots f_mf, \tau = g_1g_2\cdots g_kg,$ and $\sigma = h_1 h_2 \cdots h_lh$ all of which begin at $v$ and two of which end at the common vertex $w = r(e) = r(f)$ and two of which end at the common vertex $u = r(g) = r(h)$.  Here we let $A = S_{e_1}S_{e_2} \cdots S_{e_n} S_e S_f^*S_{f_{m}}^* \cdots S_{f_1}^*$ and $B = S_{g_1}S_{g_2} \cdots S_{g_k} S_g S_h^*S_{h_{l}}^* \cdots S_{h_1}^*$.

Now to each vertex in $G$ we assign a unique element $E_{i,i}$ in $M_n(C^*(\mathbb{F}_2))$.  If $x$ is the vertex we will write $E_{x,x}$ for the assigned element.  Next for any edge $d$ not equal to $e$ or $g$ we assign the partial isometry $d \mapsto E_{s(d),r(d)}$.  Finally we map $e$ to $U_{s(e),r(e)}$ and $f$ to $V_{s(f),r(f)}$.  It is straightforward to see that these assignments give rise to an edge-colored CK-family associated to the graph and that hence there is a $*$-homomorphism $\pi: C^*(G, \chi) \rightarrow M_n(C^*(\mathbb{F}_2))$.  Notice that under this homomorphism $A$ is sent to $U_{v,v}$ and $B$ is sent to $V_{v,v}$.  It follows that the range of $\pi$ contains a copy of $C^*(\mathbb{F}_2)$ and hence $C^*(G, \chi)$ is not exact. 

\end{proof}

\begin{remark} The value of $M_{(G, \chi)}$ is equal to the topological genus of the underlying undirected graph. For more on these graphs, including a complete description of the associated $C^*$-algebras we refer the reader to \cite{BrownleeDuncan} \end{remark}

Restating this result we have the following:

\begin{corollary} Let $(G, \chi)$ consist of only reversible edges. Then $C^*(G, \chi)$ is nuclear if and only if there a $1$-colorable graph which is obtained by reversing edges in $(G, \chi)$. Otherwise $C^*(G, \chi)$ is not exact. \end{corollary}

\subsection{The set $V_{\rm irr}$ is a singleton}

Assume that $ v \in V_{\rm irr}$ and let $d_v = (a_1, a_2, \cdots, a_k)$ be the vertex degree of $v$, and as in Remark \ref{ordering} we have $a_1 \geq a_2 \geq \cdots \geq a_k$.  Since exactness is preserved by subalgebras we will often fix a vertex $v$ and focus on the subalgebra $p_vC^*(G, \chi)p_v$ which if we can show this is non-exact will tell us about the overall algebra. This allows us to rule out a lot of graphs.

\begin{proposition}\label{vertex-degree} If $C^*(G, \chi)$ is exact and $v \in V_{\rm irr}$ has vertex degree \[d_v = (a_1, a_2, \cdots, a_k)\] then we have that $a_2 \leq 1$.
\end{proposition}

\begin{proof}
Notice that $p_vC^*(G, \chi)p_v$ contains $p_vC^*(G_1) \free{\mathbb{C}} C^*(G_2)\free{\mathbb{C}} \cdots \free{\mathbb{C}} p_vC^*(G_n)p_v$ \cite{Armstrongetal}, where $\mathbb{C}$ in the free products is the subalgebra generated by $p_v$.  We proceed in cases:

If $s(e) \neq r(e)$ for any $e$ which shares a color with another edge  then $p_vS^*(G_i) P_v$ is isomorphic to $M_{a_i}$.  It follows that $C^*(G, \chi)$ contains a subalgebra of the form $M_{a_1} \free{\mathbb{C}} M_{a_2} \free{\mathbb{C}} \cdots \free{\mathbb{C}} M_{a_n}$ which , since $a_1 \geq a_2 \geq 2$ is not exact by Proposition \ref{non-exact}.

Consider edges $e$ with $s(e) = v = r(e)$ and notice that $p_vC^*(G_i) p_v$ contains an algebra which has quotient equal to the Toeplitz algebra (consider the algebra generated by $S_e$.  The subalgebra $P_vC^*(G_i)p_v$ then contains an algebra which has a quotient isomorphic to $ C( \mathbb{T})$.  Then $p_vC^*(G, \chi)p_v$ contains a copy of either $M_2 \free{\mathbb{C}} A$ or $A \free{\mathbb{C}} A$ where $A$ surjects onto $C(|mathbb{T})$.  Since exactness is preserved by surjections combined with Proposition \ref{non-exact} it follows that $p_vC^*(G, \chi)p_v$ is not exact.
\end{proof}

We will say that a vertex whose vertex degree is of the form $(a_1, a_2, \cdots, a_n)$ with $a_1 \geq a_2 \cdots a_n$ and $a_2 = 1$ is {\em vertex-exact}.  Then another way of phrasing these propositions is that if the vertex is not vertex-exact then $C^*(G, \chi)$ is not exact.  We will see in what follows that although this is a necessary condition it is not sufficient. Keep in mind though, that if the graph is vertex-exact then the set of partial isometries $\{ S_e S_e^*: e \in E(G) \}$ is commutative and hence we know (in this case) that any finite product of the generating partial isometries is a partial isometry.

Let $V_{\rm irr} = \{ v_1, v_2, \cdots, v_k \}$ and assume that $d_{v_i} = (a_{i,1}, a_{i,2}, \cdots, a_{i,m})$ with $a_1 \geq 2$ and $a_2 \leq 1$ (i.e.\ each of the vertices is vertex exact but none of the vertices are reversible.). We let $E_{v_i}$ be the set of edges with range $v_{i}$ which $ \chi$ maps to $1$. We let $E_v = \cup E_{v_i}$.

\begin{algorithm} \label{singleton} {\bf Algorithm Based at Irreducible Vertices}

Base step: 

Set $V_0 = \{ v_1 \}$.

Base sub-step: Set $V' = \{ w: w = s(e), r(e) \in V_0, e \not\in E_v \}$.  Let $E_{0}^{\rm loop} = \{ e : s(e) = r(e) \in V_0 e \not \in E_v \}$. Let $E_0 = \{ e: s(e) \in V_0, e \not\in E_v \cup E_{0}^{\rm loop} \}$.

1st sub-step: Let $(G_1, \chi)$ be the edge-colored directed graph formed by reversing all of the edges in $E_0$. Let $V_1 = \{ v: r(e) = v, s(e) \in E_0 \}$. Let $E_{1}^{\rm loop} = \{ (e, f): e,f \in E(G_1) \setminus E_v, r(e) = r(f), s(e), s(f) \in V_0 \}$ and $E_{1'}^{\rm loop} = \{ e: r(e), s(e) \in V_1 \}$. Then let $E_1 = \{ e:  s(e) \in V_1, e \not\in E_v \cup E_1^{\rm loop} \cup E_{1'}^{\rm loop} \}$.

$i$th sub-step: Let $(G_i, \chi)$ be the edge-colored directed graph formed by reversing all of the edges in $E_{i-1}$. Let $V_i = \{ v: r(e) = v, s(e) \in E_{i-1} \}$. Let $E_{i}^{\rm loop} = \{ (e, f): e,f \in E(G_i) \setminus E_v, r(e) = r(f), s(e), s(f) \in V_{i-1} \}$ and $E_{i'}^{\rm loop} = \{ e: r(e), s(e) \in V_i \}$. Then let $E_i = \{ e:  s(e) \in V_i, e \not\in E_v \cup E_i^{\rm loop} \cup E_{i'}^{\rm loop} \}$.

Continue repeating sub-steps until you get to a point where $E_i = \emptyset$.

$i$-th step: 

Set $V_0 = \{v_k \}$ where $k$ is the smallest value such that $v_k \not\in V_i$ for any $0 \leq i $ in any of the previous sub-steps. Then repeat the sub-steps for the new $V_0$ until the $E_i$ sets are empty.

Continue repeating steps until every vertex $v_i \in V_{\rm irr}$ has appeared in one of the $V_i$ in at least one sub-step for a step.

As $ V_{\rm irr}$ is finite this process will eventually terminate.
We call the set of loops identified by the first algorithm the set of {\em reversible loops}. 

\end{algorithm}

It is possible after completing this algorithm that there are edges not in $E_v$ that have not appeared in any of the $E_i$ sets.  However, since the graph is connected then the set of edges which are not in $E_v$ and have not appeared in any $E_i$ must connect to one of the vertices in $V_{\rm irr}$.  This connection must occur through (at least) one of the elements of $E_v$, and not through a sequence of reversible edges that ends in $V_{\rm irr}$.  However, by definition all such edges are reversible.  We call the set of such edges the set of edges avoiding $V_{\rm irr}$.  We proceed with a second algorithm for these edges; it is essentially the same algorithm for the case where $V_{\rm irr}$ is empty with some slight modifications.

\begin{algorithm} \label{part2} {\bf Algorithm for Elements of $E_v$}

Base step: 

Fix a vertex $v$ which is not in $V_i$ for any $i$ and such that $ v = s(e)$ for some $e \in E_v$.  Let $V_0 = \{ v \}$, $E_0^{\rm loop} = \{ e \in G: r(e) = s(e) = v \}$ and let $E_0 = \{ e \in E \setminus E_0^{\rm loop}: r(e) \in V_0 \} $. Also let $m_1 = | E_0^{\rm loop}|$.

Step 1: 

Let $(G_1, \chi)$ be the edge-colored directed graph formed by reversing each of the edges in $E_0$. Now let $V_1 = \{ w: r(e)= w, s(e) = v \mbox{ for some } e \in E(G_1) \setminus E_r\} \setminus V_0$. Now $E_1^{\rm loop} = \{ (e,f) \in E(G_1): s(e), s(f) \in V_0, r(e) = s(e) \in V_1 \}$ and let $E_{1'}^{loop}$ be the set of all edges $e \in E_{1}$ such that $s(e) \in V_{1}, r(e) \in V_{1}$ but $ (e,f) \not\in E_{1}^{loop}$ for any $f \in E_{i-1}$. Let $E_1 = \{ e \in E(G_1): r(e) \in V_1 \} \setminus \left( \{ e: (e,f) \in E_1^{\rm loop} \mbox{ for some } f \} \right) \cup E_{1'}^{\rm loop}$. Finally we let $m_2 = | E_1^{\rm loop} | + |E_{1'}^{\rm loop}|$.

Step $i$: 
 
Let $(G_i, \chi)$ be the edge-colored directed graph formed by reversing each of the edges in $E_{i-1}$. Let $V_i \{ w: r(e) = w, s(e) = v \mbox{ for some } e \in E(G_{i-1} \setminus E_r \} \setminus V_{i-1}$. Let $E_i^{\rm loop} \{ (e,f) \in E(G_{i}): s(e), s(f) \in V_{i-1}, r(e) = s(e) \in V_i \}$ and $E_{i'}^{loop}$ be the set of all edges $e \in E_{i-1}$ such that $s(e) \in V_{i-1}, r(e) \in V_{i-1}$ but $ (e,f) \not\in E_{i}^{loop}$ for any $f \in E_{i-1}$. Let $E_i = \{ e \in E(G_i): r(e) \in V_i \} \setminus \left(\{ e: (e,f) \in E_i^{\rm loop} \mbox{ for some } f \}\right) \cup E_{i'}^{\rm loop}$. Now let $m_i = | E_i^{\rm loop}| + |E_{i'}^{\rm loop} |$.

For the vertex $v$ we define the sum $M_v = \sum m_i$. We then repeat  the algorithm for every vertex $w$ which is a source of an edge in $E_r$ that has not already appeared in one of the vertex sets in either algorithm, each time finding a value $M_w$.  As the original underlying directed graph is connected, and the graph is finite then these algorithms will eventually terminate and include every edge and vertex in the original graph. 
\end{algorithm}

We have the following proposition which completely describes exactness and nuclearity in the context where $V_{\rm irr}$ is a singleton.

\begin{theorem} \label{singletonexact} Let $(G, \chi)$ be an edge-colored directed graph in which $V_{\rm irr} = \{ v \}$ and assume that $v$ is vertex-exact.  Then $C^*(G, \chi)$ is not exact if 
\begin{itemize}
\item $(G, \chi)$ contains a reversible loop.
\item There is an edge $e$ in $E_v$ such that $M_{s(e)} \geq 2$.
\end{itemize}
otherwise $C^*(G, \chi)$ is nuclear.
\end{theorem}

\begin{proof}
Since $ v \in V_{\rm irr}$ we know that there are at least two edges with range $v$ which share a color, call these edges $e$ and $f$.  It follows that $p_vC^*(G_1)p_v$ contains a copy of $ \mathbb{C}^k$ where $k$ is the number of edges with color $1$ and range $v$.

If $(G, \chi)$ contains a reversible loop then there is a nontrivial path in the underlying undirected graph which begins and ends at $v$ and does not include any edge in $E_v$.  Let $Z$ be the partial isometry in $C^*(G, \chi)$ which traces out this path and assume, without loss of generality that $\chi$ colors every edge in this path with a $2$, and $ \chi$ colors any edge not in this path with a value other than $2$. Then $G_2$ consists of a singly colored graph with one nontrivial cycle.  Now $p_vC^*(G_2)p_v$, as in the proof of the second case of \ref{reversingtheorem} contains a copy of $C(\mathbb{T})$ generated by $X$.  We now have that $p_vC^*(G, \chi)p_v$ contains a copy of $p_vC^*(G_1)p_v \free{\mathbb{C}} p_v C^*(G_2)p_v$ \cite{Armstrongetal} which contains a copy of $\mathbb{C}^k \free{\mathbb{C}} C(\mathbb{T})$ which is not exact, and hence $C^*(G, \chi)$ is not exact.

Now if $(G, \chi)$ does not contain a reversible loop then after completing Algorithm \ref{singleton} we are left with a graph which is $1$-colorable except (potentially) for the subgraphs which connect to $v$ only through $E_v$ (i.e.\ the subgraphs that are dealt with in Algorithm \ref{part2}.  However for each of these subgraphs we use the same arguments as in Theorem \ref{reversingtheorem} to see that if there is $e$ with $S_{s(e)} \geq 2$ then the algebra is not exact and otherwise we have a collection of reversing of edges which turns $(G, \chi)$ into a $1$-colorable graph and hence the $C^*$-algebra is nuclear.
\end{proof}

A variation of the previous then applies to graphs where $V_{\rm irr}$ is not a singleton.  The extra complication is presented by the fact that two elements of $V_{\rm irr}$ may be connected via a path.  Assume that there are edges $e, f \in E_v$.  If after completing the algorithms there is a directed path $ \mu = v_1v_2\cdots v_n$ such that $s(\mu) = s(e)$ and $r(\mu) = s(f)$, then we say that there is a {\em path connecting $e$ and $f$}. 

\subsection{The set $V_{\rm irr}$ contains two or more elements}

As in the previous subsection if any element of $V_{\rm irr}$ is not vertex-exact then $C^*(G, \chi)$ is not exact, so for our purposes we will assume throughout this subsection that every element of $V_{\rm irr}$ is vertex-exact and hence any finite sequence of generating partial isometries is a partial isometry itself.

We will assume the algorithms as described have already been applied to the graph $(G, \chi)$.  Here we have to be a little more careful about paths connecting pairs of edge $e$ and $f$ both elements of $E_v$ since such paths may ``connect'' two distinct elements of $V_{\rm irr}$. We also have to deal with directed paths that connect two vertices in $V_{\rm irr}$.  We will treat these cases as one. Let $ \mu = e_1e_2\cdots, e_n$ be a path in the undirected graph which connects two vertices $v, w \in V_{\rm irr}$. Notice that, without loss of generality $ r(e_1) = v$ or $s(e_1) = v $ and $r(e_n) = w$ or $s(e_n) = w$.  (it is possible in this construction that $ v = w$).  We will say that $\mu$ is reduced if $s(e_1) \neq r(e_1), s(e_n) \neq r(e_n)$, and $e_2, e_3, \cdots, e_{n-1}$ are reversible.  Essentially, we don't want $v_1$ or $v_n$ to be loops and we don't want $\mu$ to pass through any elements of $V_{\rm irr}$ except at the start and end of the path.

Now we say that $ \mu$ has a {\em reversible end} if $ v_1$ or $v_n$ is reversible.  Without loss of generality we will assume $v_1$ is reversible and in this case we can reverse any of the edges in $\mu$ so that there is a directed path from $v$ to $w$. On the other hand, if neither $e_1 $ nor $e_n$ are reversible then we can form a directed path in the graph by reversing the other edges (as needed) so that $v_2v_3\cdots v_n$ is a directed path from $s(v_1) $ to $w$, or alternatively a directed path from $s(v_n)$ to $v$.

Assume that $(G, \chi)$ is vertex-exact and that that $\mu$ has a reversible end which gives rise to a directed path from $v$ to $w$ (i.e. $e_1$ is reversible) with $\chi(e_n) = k$. If $v$ and $w$ have vertex degree $(a_1, a_2, \cdots, a_n)$ and $(b_1, \cdots, b_m)$, respectively then we define the propagated vertex-degree along $ \mu$ to equal $(a_1 + b_1-1, b_2, b_3, \cdots, b_m)$ if $k = 1$ and $(a_1, b_2, \cdots, b_{k-1}, a_1, b_{k+1}, \cdots, b_m)$ otherwise. In the latter case we will say that the propagated vertex degree along $ \mu$ is {\em mixed}.

\begin{proposition} \label{propagation} Assume that $\mu$ has a reversible end which gives rise to a directed path from $v$ to $w$ (i.e. $e_1$ is reversible). If $C^*(G, \chi)$ is exact then the propagated vertex degree along $\mu$ is not mixed.
\end{proposition}

\begin{proof}
Assume that the propagated vertex degree along $ \mu$ is mixed. For the proof we will relabel the edges so that the edges with range $v$ that share a color will have coloring $2$ and without loss of generality we will assume that $ k = 2$.  Notice that after appropriate reversing of edges we have $e_1, e_2, \cdots, e_{n-1}$ are reversible and $ s(e_n) = r(e_{n-1})$ it follows that $S_{\mu} := S_{e_n}S_{e_{n-1}} \cdots S_{e_1}$ is a partial isometry with $S_{\mu}^*S_{\mu} = P_v$ and $ S_{\mu}S_{\mu}^* = S_{e_n}S_{e_n}^* = P_w$.

We know that 
\begin{align*} S_{\mu}p_v C^*(G_2) p_v S_{\mu}^* &= S_{\mu} S_{\mu}^*S_{\mu}C^*(G_2)S_{\mu}^*S_{\mu} S_{\mu}^* \\ &= p_wS_{\mu} C^*(G_2) S_{\mu} p_w \\ & \in p_wC^*(G_2)p_w. 
\end{align*}
It follows that $p_w C^*(G, \chi)p_w$ contains a copy of $p_wC^*(G_1)p_w \free{\mathbb{C}} S_{\mu}p_vC^*(G_2)p_v S_{\mu}^*$ (here $\mathbb{C}$ is the subalgebra generated by $P_w$.

Now we have three cases:

Case 1: ($a_2 \geq 3$ or $b_1 \geq 3$.) 
Notice that by Proposition \ref{vertex-degree} $p_vC^*(G_2)p_v$ contains a subalgebra of the form $\mathbb{C}^a_1$ and $ p_wC^*(G_1)p_w$ contains a subalgebra of the form $\mathbb{C}^b_1$. It follows that $p_w C^*(G, \chi) p_w$ contains a copy of $\mathbb{C}^{a_2} \free{\mathbb{C}} \mathbb{C}^{b_2}$ and hence is not exact as in the third type of free product considered in Proposition \ref{non-exact}.

Case 2: (There is an edge $e$ with $s(e) = r(e) \in \{ v, w \}$ and another edge $f$ with $r(f) = r(e)$ and $ \chi(e) = \chi(f)$ .)
Just as in the proof of Proposition \ref{vertex-degree} we can see that $p_{r(e)}C^*(G_{\chi(e)})p_{r(e)}$ contains a copy of $ \mathbb{C}^3$.  Now mimicking the proof of the previous case we arrive at a non-exact $C^*$-subalgebra of $C^*(G, \chi)$.

Case 3: ($a_2 = 2$, $b_1 = 2$ and there are no loops based at $v$ and $w$.)
Just as in the proof of Proposition \ref{non-exact} we can see that $p_v C^*(G_2)p_v$ contains a copy of $M_2$ and $p_wC^*(G_1)p_w$ contains a copy of $M_2$ where $p_v$ and $p_w$ are the identities in the associated copies of $M_2$.  Then $S_{\mu}p_vC^*(G_2)p_v S_{\mu}^*$ contains a copy of $M_2$ for which $p_w$ is the identity and we are left with a subalgebra of $p_wC^*(G_2)p_w$ of the form $M_2 \free{\mathbb{C}} M_2$ which is not exact by Proposition \ref{non-exact}.
\end{proof}

Notice that if every possible propagation along paths is not mixed then we will say the graph is {\em propagation-exact}. 

We are now in a position to prove a theorem about edge-colored directed graph $C^*$-algebras in the case that $V_{\rm irr}$ contains at least two elements.

\begin{theorem} Let $(G, \chi)$ be an edge-colored directed graph in which $V_{\rm irr}$ consists of at least two vertices and is vertex-exact. then $C^*(G, \chi)$ is not exact if
\begin{itemize}
\item $(G, \chi)$ contains a reversible loop.
\item There is an edge $e$ in $E_v$ such that $M_{s(e)} \geq 2$.
\item The graph is not propagation-exact. 
\end{itemize}
otherwise $C^*(G, \chi)$ is nuclear.
\end{theorem}

\begin{proof}
The arguments in the proof of Theorem \ref{singletonexact} will apply to show that if $(G,\chi)$ contains a reversible loop or there is an edge $e \in E_v$ with $M_{s(e)} \geq 2$ then $C^*(G, \chi)$ is not exact.  Similarly if the graph is not propagation-exact then the preceding proposition tells us that $C^*(G, \chi)$ is not exact.

The only difference in the proof comes when we assume that $(G,\chi)$ does not contain a reversible loop, every edge in $M_{s(e)} \leq 1$ and the graph is propagation-exact.  In this case the only technicality may be if $u, v \in V_{\rm irr}$ and there is a path $\mu$ that propagates $u$ to $v$ and $\mu$ contains a cycle.  However since $(G, \chi)$ does not contain a reversible loop then the path $ \mu$ must connect $v$ to $w$ only through two edges $e$ and $f$ both in $E_v$.  Now, since $M_{s(e)} \leq 1$ and $ M_{s(e)} \leq 1$ we know that there is at most one cycle in $ \mu$.  Now if $h$ is an edge in $ \mu$ such that $\mu$ is not in the cycle then reverse the edge so that it points away from the cycle. After doing this to all edge in $ \mu$ that are not in the cycle then $\mu$ will consist of a $1$-colored subgraph of $(G, \chi)$. 

It follows that if $(G, \chi)$ does not contain a reversible loop, there is no edges in $E_v$ with $M_{s(e)} \geq 2$ and $(G, \chi)$ is propagation-exact then again there is a collection of reversing edges which results in $(G, \chi)$ turning into a $1$-colorable graph, and hence the $C^*$-algebra $C^*(G, \chi)$ is isomorphic to a nuclear $C^*$-algebra.
\end{proof}

In effect, when considering the algorithms we have the following corollary which characterizes completely the situation.

\begin{corollary} Let $(G, \chi)$ be an edge-colored directed then $C^*(G, \chi)$ is nuclear if and only if there is a collection of reversing reversible edges which results in a $1$-colored graph. If no such reversings exist then $C^*(G, \chi)$ is not exact.
\end{corollary}

Of course once we have that a graph is $1$-colorable we know that the universal edge-colored directed graph $C^*$-algebra is isomorphic to the reduced $C^*$-algebra of a separated graph.  We can now conclude the following. (Note that this gives a complete solution to Problem 7.2 of \cite{AraGoodearl}

\begin{theorem} $C^*(G, \chi)$ is nuclear if and only if $C^*_r(G, \chi) = C^*(G, \chi)$.
\end{theorem}

\begin{proof}
If $C^*(G, \chi)$ is nuclear then it is isomorphic to $C^*(H)$ where $H$ is a $1$-colored graph constructed by reversing reversible edges in $(G, \chi)$.  But by \cite[Theorem 3.8]{AraGoodearl} $C^*(H) = C^*_r(H)$ and then applying Proposition \ref{reduced} we have that $C^*_r(H) \cong C^*_r(G, \chi)$ and the forward direction follows.

On the other hand if $C^*(G, \chi) = C^*_r(G,\chi)$ then $C^*(G, \chi)$ is a nuclear $C^*$-algebra since the reduced $C^*$-algebras are always nuclear.
\end{proof}

\begin{acknowledgement} The author wishes to think two anonymous referees who found significant errors in previous versions, as well as providing suggestions which vastly improved both the results of the paper and the exposition throughout the paper. \end{acknowledgement}

\bibliographystyle{amsplain}

\end{document}